\DeclareFontFamily{OT1}{rsfs}{}
\DeclareFontShape{OT1}{rsfs}{n}{it}{<-> rsfs10}{}
\DeclareMathAlphabet{\mathscr}{OT1}{rsfs}{n}{it}
\theoremstyle{plain}
\newtheorem{theorem}{Theorem}[section]
\newtheorem{proposition}[theorem]{Proposition}
\newtheorem{example}[theorem]{Example}
\def\c1{\mathbf 1}
 \title[Hamilton Powers of Eulerian Digraphs]{Hamilton Powers of Eulerian Digraphs}
\author{Enrico Celestino Colón}
\address{Department of Mathematics, Harvard University, Cambridge, MA USA.}
\email{ecolon@math.harvard.edu}
\author{John Urschel}
\address{Department of Mathematics, Massachusetts Institute of Technology, Cambridge, MA USA.}
\email{urschel@mit.edu}
\subjclass[2020]{Primary 05C20, 05C45.} 
\begin{document}

\maketitle

\begin{abstract}
In this note, we prove that the $\lceil \tfrac{1}{2} \sqrt{n} \log_2^2 n \rceil^{th}$ power of a connected $n$-vertex Eulerian digraph is Hamiltonian, and provide an infinite family of digraphs for which the $\lfloor \sqrt{n}/2 \rfloor^{th}$ power is not.
 \end{abstract}

\section{Preliminaries}

The $k^{th}$ power of a (directed or undirected) graph $G$, denoted $G^k$, is the graph on the vertices of $G$ in which there is an edge from a vertex $u$ to a vertex $v$ if there exists a $uv$-path in $G$ of length at most $k$.
It is well-known that the cube of any connected undirected graph is Hamiltonian (see \cite{karaganis1968cube,sekanina1960ordering}, also \cite[Ex 10-14]{diestel2005graph}). In 1974, Fleischner proved that the square of any two-connected undirected graph is Hamiltonian, solving the Plummer-Nash-Williams conjecture \cite{fleischner1974square} (see \cite{georgakopoulos2009short} for a much simpler proof). Unfortunately, strongly-connected directed graphs (digraphs) may require the $\lceil n/2 \rceil^{th}$ power to be Hamiltonian; even $k$-strong connectedness is only sufficient for guaranteeing that the $\lceil n/(2k) \rceil^{th}$ power is Hamiltonian \cite{schaar1997upper}. For a general survey on Hamilton cycles in digraphs, we refer the reader to \cite{kuhn2012survey}. Interestingly, results for Eulerian digraphs are not nearly so bleak\footnote{The first notable example of a class of digraphs requiring a ``non-trivial" (say, $o(n)$) Hamiltonicity exponent are cacti, see \cite{schaar1994remarks} for details.}. Through the study of minimally Eulerian digraphs (connected Eulerian digraphs with no proper connected Eulerian subgraph), we prove that

\begin{theorem} \label{thm:main}	The $\lceil \tfrac{1}{2} \, \sqrt{n} \log_2^2 n \rceil^{th}$ power of any $n$-vertex connected Eulerian digraph is Hamiltonian.
\end{theorem}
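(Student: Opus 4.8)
The plan is to reduce to the minimally Eulerian case and then to read a Hamilton cycle of the power off of a single Eulerian circuit. Since a directed $uv$-path in a subgraph remains a directed $uv$-path in the ambient graph, $H \subseteq G$ gives $H^k \subseteq G^k$; thus, taking $H$ to be a spanning, connected, Eulerian subgraph of $G$ that is minimal with respect to edge deletion (a minimally Eulerian digraph), it suffices to prove that $H^k$ is Hamiltonian. I therefore assume henceforth that $G$ is minimally Eulerian and fix an Eulerian circuit $W = (w_0, \dots, w_{m-1})$, read cyclically. The point of $W$ is that the sub-walk from an occurrence $w_i$ forward to a later occurrence $w_j$ has length $(j-i)\bmod m$, and so yields an edge of $G^{(j-i)\bmod m}$ from $w_i$ to $w_j$. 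Hence it is enough to select one occurrence (a representative) of each of the $n$ vertices so that, in cyclic circuit order, consecutive representatives lie within forward circuit-distance $k$: traversed in that order, the representatives form a Hamilton cycle of $G^k$.

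The first task is to extract the quantitative consequences of minimality. Because deleting the edges of any directed cycle of $G$ must destroy either spanning or connectivity, $G$ is sparse; I expect to show $m = O(n)$, together with a decomposition of $G$ into edge-disjoint cycles whose pattern of shared vertices is tree-like. Two corollaries drive the remainder: the total number of occurrences along $W$ is $m = O(n)$, so at most $O(\sqrt{n})$ vertices occur more than $\sqrt{n}$ times; and the resulting ``cycle-tree'' lets me cut $W$ into balanced pieces for a recursion.

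The heart of the matter is the representative-selection (covering) lemma: that representatives can be chosen with maximum forward gap $\tfrac{1}{2}\sqrt{n}\log_2^2 n$. I plan to carry this out by a balanced binary recursion on $W$ of depth $\log_2 n$: at each scale I split the current arc into two halves, commit the representatives of the frequent (high-multiplicity) vertices so that they tile the revisit-heavy portions, and then cover the remaining windows with the plentiful rare vertices, the feasibility of each step being a Hall-type condition guaranteed by the multiplicity bounds above. The $\sqrt{n}$ factor is the balance point between the two competing costs, namely the number of cycles one must cross against the length of an individual cycle, while the two logarithmic factors arise from the $\log_2 n$ recursion levels together with a $\log_2 n$ overhead for re-covering, at each level, the windows left open by the levels above it.

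I expect this covering lemma to be the main obstacle. The conceptual reason the exponent is $\sqrt{n}$ rather than $O(1)$ is the directedness: unlike the undirected cube, where a spanning path may be ``folded'' back on itself to close a Hamilton cycle at bounded cost, here one may only advance along $W$, so a return must be paid for by traversing an entire cycle, and it is precisely the clustering of the few frequent vertices along such forced returns that pins the attainable gap at $\Theta(\sqrt{n})$. Showing that this clustering can always be resolved within the stated bound, i.e.\ controlling the longest stretch of $W$ that any legal choice of representatives is forced to leave uncovered, is where the minimality hypothesis must be used in full; the reduction and the circuit set-up are routine by comparison.
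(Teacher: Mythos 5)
Your reduction to the minimally Eulerian case and your Euler-circuit reformulation---choose one representative occurrence of each vertex so that consecutive representatives lie within forward circuit-distance $k$---are correct and are in fact exactly the paper's framework: the segments of $W$ between consecutive representatives are the paper's edge-disjoint $v_iv_{i+1}$-dipaths $P_1,\dots,P_n$. The gap is in the quantitative core. Your plan rests on the claim that minimality forces $m=O(n)$, so that only $O(\sqrt{n})$ vertices occur more than $\sqrt{n}$ times on $W$. That claim is false: minimality only gives $m=O(n^{3/2})$ (Proposition \ref{prop:mineuleriansize}), and this is tight---the paper's Example \ref{ex:graph} exhibits minimally Eulerian digraphs $G_k$ with $n=k^2+k-1$ vertices and roughly $n^{3/2}/2$ edges, in which on the order of $n$ vertices each occur about $\sqrt{n}$ times on the Euler circuit. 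With $m=\Theta(n^{3/2})$ your frequent/rare dichotomy collapses (the bound on the number of ``frequent'' vertices becomes $O(n)$, i.e., vacuous), and the Hall-type feasibility condition at each level of your recursion is precisely what is in doubt: the hard case is when essentially every vertex is revisited $\Theta(\sqrt{n})$ times and a long stretch of $W$ keeps cycling through a small vertex set. The ``tree-like'' cycle decomposition is likewise more than minimality gives you; all the paper extracts from minimality is that every dicycle meets a spanning arborescence, so that $G$ minus an arborescence is acyclic.

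For contrast, the paper does not construct the representatives directly. It takes the choice that lexicographically minimizes the sorted segment lengths and derives a contradiction if the longest segment has length $\alpha\sqrt{n}$ with $\alpha \gg \log^{\log_{3/2}2} n$: lexicographic minimality forces every vertex of a long segment to be an endpoint of another long segment (otherwise one could reroute and shorten the longest one), and iterating this yields nested subgraphs $H_0\subset H_1\subset\cdots$ whose vertex counts grow geometrically, the growth rate being controlled by edge-versus-vertex bounds for acyclic digraphs with few path endpoints (Proposition \ref{prop:acyclic}) and for disjoint unions of minimally Eulerian digraphs (Proposition \ref{prop:mineuleriansize}), until the vertex count would exceed $n$. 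If you want to salvage your direct covering approach, you would need a substitute for the false sparsity claim---some lemma bounding how long a stretch of $W$ can avoid introducing a new, as-yet-unrepresented vertex---and that is essentially what the paper's extremal argument is engineered to provide.
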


In fact, we prove an even stronger result (in Theorem \ref{thm:main_detail}) that, given a minimally Eulerian digraph $G = (V,A)$, specifies an ordering $v_1,...,v_n$ of $V$ and an edge-disjoint directed path (dipath) decomposition $P_1,...,P_n$ of $G$, such that each $P_i$ is a $v_i v_{i+1}$-dipath ($v_{n+1}:=v_1$) of length at most $\lceil \tfrac{1}{2} \, \sqrt{n} \log_2^2 n \rceil$. In addition, we provide an infinite family of minimally Eulerian digraphs for which the $\lfloor \sqrt{n}/2 \rfloor^{th}$ power is not Hamiltonian (Example \ref{ex:graph}). For details regarding the importance of minimally Eulerian digraphs and their connection to the traveling salesman problem, we refer the reader to \cite{doi:10.1137/0213007,papadimitriou1981minimal}.

\subsection{Notation, Definitions, and Basic Results}

Let $G = (V,A)$ be a simple digraph. If $G$ contains a spanning directed cycle (dicycle), then $G$ is Hamiltonian. If $G$ contains an Euler circuit (a circuit containing every edge), then $G$ is Eulerian. If $G$ is connected, this is equivalent to the condition that, for every vertex $v \in V$, the indegree $d^-(v)$ equals the outdegree $d^+(v)$. If $G$ is a connected Eulerian digraph and contains no proper connected Eulerian subgraph on the vertices of $G$, then $G$ is minimally Eulerian; equivalently, a connected Eulerian digraph $G$ is minimally Eulerian if, for any dicycle $C$ of $G$, the graph $G-C:= (V,A - A(C))$ is disconnected. If $G$ contains no dicycle, then $G$ is acyclic. For more details regarding graph theoretic definitions and notation, we refer the reader to \cite{bollobas1998modern}. Let 
 $$p_{\#}(G) := \frac{1}{2} \sum_{u \in V} | d^+(u) - d^-(u) |,$$
 a measure of how ``close" to Eulerian a digraph is, and a key ingredient in our proof. The quantity $p_{\#}(G)$ is exactly the minimal number of dipaths required in an edge-disjoint decomposition of $G$ into dipaths and dicycles. That $p_{\#}(G)$ dipaths are required follows immediately from the definition of $p_{\#}(G)$ above. That $p_{\#}(G)$ dipaths are sufficient follows from a simple greedy algorithm (iteratively perform walks from vertices $u$ with $d^+(u)>d^-(u)$, removing dicycles when they are formed, and only removing the dipath when a vertex $v$ with $d^+(v) = 0$ is reached). 
 The size of an acyclic digraph $G$ is immediately bounded above by $p_{\#}(G) \,(|V|-1)$, and an even tighter estimate can be obtained relatively quickly:
 
\begin{proposition} \label{prop:acyclic}
	 Let $G = (V,A)$ be an acyclic digraph. Then $|A| \le \sqrt{2 p_{\#}(G)} \, |V|$.
\end{proposition}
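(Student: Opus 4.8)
The plan is to exploit acyclicity through a topological order and to show that $p_{\#}(G)$ controls the number of arcs crossing any cut of that order. Since $G$ is acyclic, fix a topological ordering $v_1,\dots,v_n$ of $V$, so that every arc $(v_a,v_b)\in A$ satisfies $a<b$. For $1\le i\le n-1$ write $S_i=\{v_1,\dots,v_i\}$ and $T_i=V\setminus S_i$, and let $c_i$ be the number of arcs crossing this cut; because all arcs point forward, each such arc runs from $S_i$ to $T_i$ and none runs from $T_i$ to $S_i$.

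The first, and conceptually central, step is the bound $c_i\le p_{\#}(G)$ for every $i$. Writing $x_v:=d^+(v)-d^-(v)$, a standard arc count gives $c_i=\sum_{v\in S_i}\bigl(d^+(v)-d^-(v)\bigr)=\sum_{v\in S_i}x_v$, since arcs internal to $S_i$ cancel and no arc enters $S_i$ from $T_i$. Because $\sum_{v\in V}x_v=0$, the positive parts of the $x_v$ sum to the same value as the negative parts, and each equals $\tfrac12\sum_{v\in V}|x_v|=p_{\#}(G)$. Hence $c_i=\sum_{v\in S_i}x_v\le\sum_{v\in V}\max(x_v,0)=p_{\#}(G)$. (Equivalently, in any decomposition of $A$ into $p_{\#}(G)$ dipaths, each path is increasing in the topological order and so meets cut $i$ in exactly one arc, giving the same count.)

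With the per-cut bound in hand, I would group the vertices into consecutive blocks of some integer width $w$ and split the arcs into those lying inside a single block and those crossing a block boundary. An arc inside a block of size $b\le w$ belongs to an acyclic digraph on $b$ vertices, and summing $\binom{b}{2}\le\tfrac{w-1}{2}\,b$ over all blocks bounds the number of internal arcs by $\tfrac{(w-1)n}{2}$. There are at most $n/w$ block boundaries, and each is a cut crossed by at most $p_{\#}(G)$ arcs, so the number of boundary-crossing arcs is at most $\tfrac{n}{w}\,p_{\#}(G)$. Combining the two estimates yields
$$|A|\le \frac{(w-1)n}{2}+\frac{n}{w}\,p_{\#}(G).$$

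It then remains to optimize over $w$. The continuous minimizer of the right-hand side is $w=\sqrt{2\,p_{\#}(G)}$, which gives the value $\sqrt{2\,p_{\#}(G)}\,n-\tfrac{n}{2}$, so the $-\tfrac n2$ coming from the internal-arc count provides exactly the slack needed to absorb integer rounding: taking $w$ to be the nearest integer to $\sqrt{2\,p_{\#}(G)}$, one checks that the excess $\tfrac{n}{2w}\bigl(w-\sqrt{2\,p_{\#}(G)}\bigr)^2$ is at most $\tfrac{n}{2}$ since $\bigl(w-\sqrt{2\,p_{\#}(G)}\bigr)^2\le\tfrac14\le w$, establishing $|A|\le\sqrt{2\,p_{\#}(G)}\,n$. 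The degenerate cases are immediate ($p_{\#}(G)=0$ forces $A=\varnothing$, and $w=1$ already suffices when $p_{\#}(G)$ is small). I expect the only genuinely nontrivial point to be the cut bound $c_i\le p_{\#}(G)$; the block decomposition and the optimization are routine once that inequality is available.
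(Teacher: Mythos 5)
Your proof is correct and follows essentially the same route as the paper's: a topological order, blocks of consecutive vertices, a trivial bound on intra-block arcs, and a bound of $p_{\#}(G)$ on the arcs crossing each block boundary (you derive this from a degree-sum identity over prefix cuts, the paper from the decomposition into $p_{\#}(G)$ edge-disjoint dipaths---these are the same count). Your choice of $w$ as the nearest integer to $\sqrt{2p_{\#}(G)}$ together with the exact identity for the excess makes the final optimization cleaner than the paper's convexity argument, but the underlying estimate $|A|\le \tfrac{(w-1)n}{2}+\tfrac{n}{w}\,p_{\#}(G)$ is identical.
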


\begin{proof}
If $p_{\#}(G) =0,1,2$, the result follows immediately, as $|A| \le p_{\#}(G) (|V|-1)$. Now, let $p_{\#}(G) > 2$, $V = \{v_1,...,v_n\}$ be a topological sorting of $G$ (i.e., $v_iv_j\in A$ implies that $i<j$), $k \in \mathbb{N}$ be the smallest number such that $p_{\#}(G) \le {k \choose 2}$, $\ell = \lceil n/k \rceil$, and $V_i = \{v_{(i-1) k+1},..., v_{ik} \} $, $i = 1,...,\ell-1$, $V_\ell = \{v_{(\ell-1)k+1},...,v_n\}$. There are at most ${k \choose 2}$ edges within each of the subsets $V_i$, $i =1,...,\ell-1$, and at most ${n - k(\ell-1) \choose 2}$ within the subset $V_\ell$. Our digraph $G$ can be decomposed into $p_{\#}(G)$ edge-disjoint dipaths, and, by the topological sorting of $V$, each of the aforementioned $p_{\#}(G)$ dipaths has at most $\ell-1$ edges between the subsets $V_1,...,V_\ell$. Therefore, there are at most $(\ell -1) p_{\#}(G)$ total edges between the subsets $V_1,...,V_\ell$. Combining these estimates gives
	$$|A| \le \big(\ell -1 \big) \big[ \textstyle{{k \choose 2}} + p_{\#}(G) \big] + {n - k(\ell -1) \choose 2}.$$
Dividing by $\sqrt{p_{\#}(G)} \, n$, we have
$$	\frac{|A|}{\sqrt{p_{\#}(G)} \, n}\le \frac{\ell -1 }{n} \bigg( \frac{{k \choose 2}}{\sqrt{p_{\#}(G)}}+\sqrt{p_{\#}(G)}\bigg)
	+\frac{{n - k(\ell -1) \choose 2}}{\sqrt{p_{\#}(G)} n}. $$
 
 The right hand side is convex w.r.t. $p_{\#}(G)$ and maximized when $p_{\#}(G)$ is as small as possible. We note that, by the definition of $k$, $p_{\#}(G) > {k-1 \choose 2}$. So the right hand side can be bounded above by replacing $p_{\#}(G)$ by ${k-1 \choose 2}$, giving
$$\frac{|A|}{\sqrt{p_{\#}(G)} \, n}< \frac{\ell -1 }{n} \, \frac{(k-1)^2}{{k-1 \choose 2}^{1/2}}
	+\frac{\big(n - k(\ell -1)\big) \big(n - k(\ell -1)-1\big)}{2{k-1 \choose 2}^{1/2} n}.$$

	The right hand side is a convex quadratic function in the term $\ell$ (treating $\ell$ as a variable independent of $n$ and $k$), and therefore achieves its maximum at one of the endpoints of the interval $[n/k,n/k+1]$. Setting $\ell = n/k$ gives 
    $$ \frac{\ell -1 }{n} \, \frac{(k-1)^2}{{k-1 \choose 2}^{1/2}}
	+\frac{\big(n - k(\ell -1)\big) \big(n - k(\ell -1)-1\big)}{2{k-1 \choose 2}^{1/2} n}= \frac{(k-1)^2}{k {k-1 \choose 2}^{1/2}}- \frac{k^2-3k+2}{2n{k-1 \choose 2}^{1/2}},$$     
and setting $\ell = n/k +1$ gives
    $$ \frac{\ell -1 }{n} \, \frac{(k-1)^2}{{k-1 \choose 2}^{1/2}}
    +\frac{\big(n - k(\ell -1)\big) \big(n - k(\ell -1)-1\big)}{2{k-1 \choose 2}^{1/2} n}= \frac{(k-1)^2}{k {k-1 \choose 2}^{1/2}}.$$
    
Noting that $k^2-3k+2\ge 0$ for all $k \in \mathbb{N}$, we conclude that the maximum over the interval $[n/k,n/k+1]$ is obtained at $\ell = n/k+1$. Replacing $\ell$ by $n/k+1$, we have
	$$	|A| < \frac{(k-1)^2}{k \textstyle{{k-1 \choose 2}}^{1/2} } \, \sqrt{p_{\#}(G)} \, n = \frac{(k-1)^{3/2}}{k(k-2)^{1/2}}\sqrt{2p_{\#}(G) }\, n \le \sqrt{2 p_{\#}(G)} \, n,$$
for $k \ge 3$ (recall, $p_{\#}(G) > 2$).
	\end{proof}

From Proposition \ref{prop:acyclic} we immediately obtain a bound (tight up to a multiplicative constant; see Example \ref{ex:graph}) on the maximum size of a minimally Eulerian digraph:

\begin{proposition} \label{prop:mineuleriansize}
	Let $G=(V,A)$ be a minimally Eulerian digraph. Then \newline
	$|A| \le \sqrt{2(|V|-1)} \; |V| + |V| -1$.
\end{proposition}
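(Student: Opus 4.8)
The plan is to deduce the bound from Proposition \ref{prop:acyclic} by exhibiting a small \emph{feedback arc set}. Concretely, I would first reduce the statement to the claim that any minimally Eulerian $G=(V,A)$ on $n=|V|$ vertices admits a set $F \subseteq A$ with $|F| \le n-1$ such that $G-F$ is acyclic. Granting this, since $G$ is Eulerian we have $d^+(v)=d^-(v)$ for every $v$, so deleting $F$ changes the imbalance at each vertex by exactly the net number of incident arcs of $F$; hence $p_{\#}(G-F) = \tfrac12\sum_{v}|f^+(v)-f^-(v)| \le |F| \le n-1$, where $f^+(v)$ and $f^-(v)$ count the arcs of $F$ leaving and entering $v$ (so $\sum_v f^+(v)=\sum_v f^-(v)=|F|$). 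Applying Proposition \ref{prop:acyclic} to the acyclic digraph $G-F$ then gives $|A|-|F| = |A(G-F)| \le \sqrt{2\,p_{\#}(G-F)}\,n \le \sqrt{2(n-1)}\,n$, and adding back $|F|\le n-1$ yields exactly $|A| \le \sqrt{2(n-1)}\,n + (n-1)$.

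To produce such an $F$, I would prove the stronger statement that a minimally Eulerian digraph decomposes into at most $n-1$ arc-disjoint dicycles; deleting one arc from each dicycle then leaves an acyclic digraph and gives $|F|\le n-1$. I would argue by strong induction on $n$. For $n=1$ there are no arcs. For $n\ge 2$, a weakly connected Eulerian digraph with an arc is strongly connected and contains a dicycle $C$; by minimality $G-C$ is disconnected, with weakly connected components on vertex sets $V_1,\dots,V_r$, $r\ge2$, partitioning $V$. Each component $G_j:=(G-C)[V_j]$ is balanced and weakly connected, hence Eulerian, and I claim it is again minimally Eulerian, so that the induction hypothesis supplies a decomposition of $G_j$ into at most $|V_j|-1$ dicycles. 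Taking the union of these decompositions together with $C$ itself gives a decomposition of $G$ into at most $1+\sum_j(|V_j|-1)=1+(n-r)\le n-1$ dicycles, completing the induction.

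The crux, and the step I expect to be the main obstacle, is verifying that each component $G_j$ is itself minimally Eulerian; this fails for arbitrary Eulerian digraphs (e.g.\ complete digraphs, whose feedback arc sets are quadratic), so minimality must genuinely propagate to the pieces. I would establish it by contradiction: if some $G_j$ had a dicycle $C'$ with $G_j-C'$ still weakly connected, then $C'$ would be a dicycle of $G$ whose removal leaves $G$ connected, contradicting the minimality of $G$. The delicate point to check is that $G-C'$ is indeed connected: the arcs of $C$ are untouched, and since $G-C$ is disconnected while $G$ is connected, $C$ must meet every component and therefore reglues $V_1,\dots,V_r$ into one weakly connected graph, while $G_j-C'$ keeps $V_j$ internally connected. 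One must also dispose of the degenerate components carrying no arcs (isolated vertices lying on $C$, each with unit in- and out-degree), which contribute nothing to the cycle count. Once this propagation is in hand, the bound of $n-1$ dicycles and the reduction above finish the proof.
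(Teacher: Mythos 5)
Your proof is correct, and it shares the paper's overall skeleton---exhibit a feedback arc set $F$ with $|F|\le |V|-1$, note that $p_{\#}(G-F)\le |F|$ because $G$ is balanced, and apply Proposition \ref{prop:acyclic} to the acyclic remainder before adding $|F|$ back---but you construct $F$ by a genuinely different route. The paper takes $F$ to be a spanning arborescence $T$ (which exists since a connected Eulerian digraph is strongly connected, and has exactly $|V|-1$ arcs) and observes that every dicycle must use an arc of $T$: otherwise removing that dicycle would leave the spanning connected subgraph $T$ intact, contradicting minimality. Hence $G-T$ is acyclic and the bound follows in two lines. You instead prove the structural lemma that a minimally Eulerian digraph decomposes into at most $|V|-1$ arc-disjoint dicycles and delete one arc per dicycle. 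Your induction is sound: the crux, that minimality propagates to the weakly connected components of $G-C$, is handled correctly (every component of $G-C$ must contain a vertex of $C$ for $G$ to be connected, so restoring $C$ reglues the components, and a removable dicycle inside one component would therefore be removable in $G$), the isolated-vertex components are correctly discounted, and the count $1+\sum_j(|V_j|-1)=1+n-r\le n-1$ for $r\ge 2$ is right. Your approach is longer but yields the dicycle-decomposition bound as a statement of independent interest; the paper's arborescence argument is shorter and in fact recovers that bound for free, since the cycles of any arc-disjoint dicycle decomposition each consume at least one of the $|V|-1$ arcs of $T$.
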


\begin{proof}
    $G$ is a connected Eulerian digraph, so it admits a rooted, directed subgraph $T$ of $G$ in which there is a unique path (in $T$) from the root to any other vertex of $G$. Every dicycle of $G$ must intersect an edge of $T$, as the removal of any dicycle from a minimally Eulerian graph disconnects it. Therefore, $G-T$ is acyclic, and by Proposition \ref{prop:acyclic}, $|A| \le |A(G-T)|+ |A(T)| \le \sqrt{2(|V|-1)} \; |V| + |V| -1$.
\end{proof}

\section{A Proof of Theorem \ref{thm:main} and a Lower Bound}

To prove Theorem \ref{thm:main}, we show an even stronger statement regarding minimally Eulerian digraphs.

\begin{theorem}\label{thm:main_detail}
    Let $G = (V,A)$, $|V|=n>1$, be a minimally Eulerian graph. Then there exists an ordering $v_1,...,v_n$ of $V$ and an $n$-dipath edge-disjoint decomposition $P_1,...,P_n$ of $G$ such that each $P_i$ is a $v_iv_{i+1}$-dipath ($v_{n+1}:=v_1$) of length at most $\lceil f(n) \, \sqrt{n}\, \rceil$, where
    $$f(n) =  (\log_2 n)^{\log_{3/2} 2 + o(1)} \le \frac{1}{2} \log^2_2 n.$$ 
\end{theorem}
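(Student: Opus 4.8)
The plan is to build the ordering $v_1,\dots,v_n$ and the decomposition $P_1,\dots,P_n$ recursively, exploiting the separating structure of minimally Eulerian digraphs. First I would record the reformulation that makes the target transparent: since the $P_i$ are edge-disjoint, cover $A$, and satisfy the endpoint condition ($P_i$ runs from $v_i$ to $v_{i+1}$, indices cyclic), their concatenation $P_1 P_2 \cdots P_n$ is an Euler circuit of $G$; conversely, any Euler circuit together with a choice of $n$ ``break vertices'' (one visit of each $v\in V$, listed in the order in which they serve as breakpoints) yields such a decomposition, provided each resulting arc is a simple dipath. Thus the task is to produce an Euler circuit and cut it into $n$ arcs that are simple dipaths and are \emph{balanced}, each of length at most $\lceil f(n)\sqrt n\rceil$. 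By Proposition~\ref{prop:mineuleriansize} the total length $|A|$ is $O(n^{3/2})$, so the average arc length is $\Theta(\sqrt n)$ and we are asking only to lose the polylogarithmic factor $f(n)$ over this average.

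For the recursive step I would use the defining property that deleting any dicycle disconnects $G$. I would select a \emph{balanced separating dicycle} $C$ whose removal breaks $G$ into components $H_1,\dots,H_r$ on which the governing size parameter drops. Reattaching to each $H_j$ the arcs of $C$ incident to it while contracting the remainder of $C$ to a single vertex produces a digraph $G_j$ that is again connected and Eulerian, and one checks it remains minimally Eulerian, so the recursion applies. I would recurse on each $G_j$ to obtain balanced simple-arc decompositions and then splice them along $C$: walking around $C$ and entering/leaving each piece through its contracted $C$-vertex, each new arc is the concatenation of at most two sub-arcs, so the per-arc length bound at most doubles per level while the arcs stay simple (each piece is entered and exited only at the contracted vertex) and edge-disjoint. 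Making the splicing respect the cyclic ordering and keep every arc a genuine simple dipath is the delicate bookkeeping here.

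For the accounting, the balanced choice of $C$ is arranged so that the size parameter of each subproblem drops from $N$ to roughly $N^{2/3}$ (equivalently, $\log_2$ of the size contracts by a factor $2/3$), while the length bound at most doubles; this yields a recursion of the shape $f(n)\le 2\,f(n^{2/3})+O(1)$ with depth $\log_{3/2}\log_2 n$. Writing $g(x)=f(2^{x})$ turns this into $g(x)\le 2\,g(\tfrac23 x)+O(1)$, whose solution is $g(x)=x^{\log_{3/2}2+o(1)}$, i.e. $f(n)=(\log_2 n)^{\log_{3/2}2+o(1)}$; since $\log_{3/2}2<2$, this is at most $\tfrac12\log_2^2 n$ for all large $n$, with small cases handled directly. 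The main obstacle I anticipate is the structural separator lemma: showing that a minimally Eulerian digraph always admits a separating dicycle achieving the $N\mapsto N^{2/3}$ balance \emph{and} leaving each piece minimally Eulerian, together with verifying that splicing along $C$ never repeats a vertex inside an arc. Proposition~\ref{prop:acyclic} should be the quantitative engine behind the balance, since it controls how many edges can accumulate in any acyclic (topologically sorted) part of a piece, and hence forces the size parameter to decrease at the required rate.
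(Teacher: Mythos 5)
Your reduction to cutting an Euler circuit into $n$ balanced arcs is a sensible normalization, but the engine of your argument --- a \emph{balanced separating dicycle} whose removal splits a minimally Eulerian digraph into pieces of size roughly $N^{2/3}$ that are again minimally Eulerian --- is not only left unproven but is false in the form you need. The paper's own Example \ref{ex:graph} is a counterexample: in $G_k$ every dicycle passes through exactly one vertex $v_i$ with $d^+(v_i)=d^-(v_i)=1$, so deleting any dicycle isolates that single $v_i$ and leaves essentially all of the other $n-1$ vertices (and almost all edges) in one giant component; no dicycle achieves any nontrivial balance, and your recursion $f(n)\le 2f(n^{2/3})+O(1)$ therefore has no structural basis and makes no progress on this family. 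There are further unaddressed problems in the splice step: the contracted $C$-vertex hides a segment of $C$ that may have length up to $n$, so an arc entering and leaving a piece through that vertex can be far longer than twice the recursive bound, and the claim that each contracted piece is again minimally Eulerian is asserted rather than checked.

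The paper's proof is entirely different in character: it is extremal rather than constructive. It takes the decomposition into $n$ edge-disjoint $v_iv_{i+1}$-dipaths that lexicographically minimizes the multiset of path lengths, supposes the longest path $P_j$ has length $\alpha\sqrt n$ with $\alpha$ too large, and grows a nested sequence $H_0=P_j\subset H_1\subset\cdots$, where $H_\ell$ collects all decomposition paths of length at least $\alpha\sqrt n/2^\ell$ having an endpoint in $H_{\ell-1}$. Lexicographic minimality forces every vertex of $H_\ell$ to be an endpoint of a long path, while Propositions \ref{prop:acyclic} and \ref{prop:mineuleriansize} cap the number of edges of $H_\ell$; together these yield the growth recursion $n_\ell\ge\big(n_{\ell-1}m_0/(5\times 2^\ell)\big)^{2/3}$, which after $\Theta(\log\log n)$ steps would exceed $n$ unless $\alpha\le(\log_2 n)^{\log_{3/2}2+o(1)}$. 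The exponent $2/3$ you anticipated does appear, but it arises from this density counting, not from a separator; as written, your proposal does not prove the theorem.
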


\begin{proof}
We first show that there exists an ordering $v_1,...,v_n$ of $V(G)$ such that there is an $n$-dipath edge-disjoint decomposition $P_1,...,P_n$ of $G$ such that each $P_i$ is a $v_i v_{i+1}$-dipath. This ordering and decomposition can be constructed by picking a base vertex $v_1 \in V(G)$ and considering an Eulerian circuit $W$ of $G$ starting at $v_1$, ordering the remaining vertices based on the order of first appearance in this circuit, and taking each dipath $P_i$ to be the walk in $W$ between the first appearance of $v_i$ and the first appearance of $v_{i+1}$. As $G$ is minimally Eulerian, each such walk is a dipath. It suffices to consider $n \ge 6388$, as the length of a dipath is at most $n-1$ and $\lceil \tfrac{1}{2} \sqrt{n}  \log^2_2 n \, \rceil \ge n-1$ for $n=1,..., 6387$.

Let $v_1,...,v_n$ be an ordering of $V(G)$ and $P_1,...,P_n$ a decomposition of $G$ into edge-disjoint $v_i v_{i+1}$-dipaths $P_i$. We choose this ordering and decomposition so that the elements of the set $\{|A(P_1)|,...,|A(P_n)|\}$ are lexicographically minimized (i.e., minimizes the length of the longest dipath, minimizes the length of the $2^{nd}$ longest dipath conditional on the minimality of the longest dipath, etc). Let $\widehat P$ be the longest dipath in the set $\{P_1,...,P_n\}$, with length $|A(\widehat P)|=\alpha \sqrt{n}$ for some $\alpha \ge \tfrac{1}{2} \big[\log_2 n\big]^{\log_{3/2} 2}$. We aim to build a sequence of subgraphs $H_0(:=\widehat P)\subset H_1 \subset H_2\subset...$, bound the order of each subgraph from below using the lexicographic minimality of path lengths, and conclude that if $\alpha$ is too large then some $H_i$ contains too many vertices, thus producing an upper bound on $\alpha$.

Let $H_0 = \widehat P$. Let $H_{\ell}$, $\ell >0$, be the union of all $P_i$ satisfying both $|A(P_i)|\ge \alpha \sqrt{n}/2^{\ell}$ and  $\{v_i,v_{i+1}\}\cap V(H_{\ell-1})\neq \varnothing$. Let $n_\ell$, $m_\ell$, and $k_\ell$ be the number of vertices, edges, and dipaths $P_i$ in $H_\ell$. We have $n_0 = \alpha \sqrt{n} +1$, $m_0 = \alpha \sqrt{n}$, $k_0 =1$ and, by construction, $m_{\ell} \ge k_{\ell} m_0/2^{\ell}$ for all $\ell \ge 0$.

We may produce a lower bound for the size of each $H_\ell$ by our lexicographic minimality condition. We claim that every vertex of $H_\ell$ is either the start- or end-vertex of a dipath $P_i$ of length at least $m_0/2^{\ell+1}$. Suppose, to the contrary, that some $v_i \in V(H_\ell)$ satisfies $|A(P_{i-1})|,|A(P_{i})| <m_0/2^{\ell+1}$. Let $P_j$ be a dipath in $H_\ell$ containing $v_i$,
and let us denote the $v_{j}v_{i}$ (resp. $v_{i}v_{j+1}$) portion of this path by $P_j^1$ (resp. $P_j^2$).
By removing $P_i$, $P_{i+1}$, and $P_j$ from our set $\{P_1,...,P_n\}$ and replacing them with $P_j^1$, $P_j^2$, and $P_i \cup P_{i+1}$, we have replaced a path of length $|A(P_j)|$ ($|A(P_j)| \ge m_0/2^{\ell}$) with paths all of length strictly less than $|A(P_j)|$, a contradiction. Therefore, $k_{\ell+1} \ge n_{\ell}/2$ for all $\ell \ge 0$, as every vertex in $V(H_\ell)$ is the start- or end-vertex of a dipath $P_i$ in $H_{\ell+1}$, and each dipath $P_i$ has only one start- and one end-vertex.

The graph $H_\ell$ can be decomposed into the edge-disjoint union of two graphs $H_{\ell,a}$ and $H_{\ell,e}$, where $H_{\ell,a}$ is acyclic with $p_{\#}(H_{\ell,a}) \le k_\ell$ (as $H_{\ell}$ is the edge-disjoint union of $k_\ell$ paths) and $H_{\ell,e}$ is the vertex-disjoint union of minimally Eulerian graphs $H_{\ell,e}^{(1)},...,H_{\ell,e}^{(p_\ell)}$ for some $p_\ell$ (if the Eulerian graph $H_{\ell,e}^{(j)}$ is not minimal, neither is $G$). By Proposition \ref{prop:acyclic}, $H_{\ell,a}$ has at most $\sqrt{2 k_\ell} \, n_\ell$ edges. By Proposition \ref{prop:mineuleriansize}, $H_{\ell,e}$ has at most

$$\sum_{j=1}^{p_\ell} \left(\sqrt{2(n_\ell^{(j)}-1)}\, n_\ell^{(j)}+n_{\ell}^{(j)}-1\right) \leq \sqrt{2(n_\ell-1)} \, n_\ell +n_\ell-1$$
edges, where $n_\ell^{(j)}:=|V(H_{\ell,e}^{(j)})|$, $j=1,...,p_\ell$. Therefore,
\begin{equation*}
    m_\ell \le \sqrt{2 k_\ell} \, n_\ell +\sqrt{2(n_\ell-1)} \, n_\ell +n_\ell-1.
\end{equation*}
Combining this inequality with the bound $m_{\ell} \ge k_{\ell} m_0/2^{\ell}$, we have
\begin{equation}\label{ineq:Hsize}
    k_{\ell} m_0/2^{\ell} \le \sqrt{2 k_\ell} \, n_\ell +\sqrt{2(n_\ell-1)} \, n_\ell +n_\ell-1.
\end{equation}

Using Inequality (\ref{ineq:Hsize}), we produce a recursive lower bound on $n_\ell$ that gives an upper bound on $\alpha$. In particular, we aim to show that
\begin{equation}\label{ineq:recursive-bound}
    n_\ell \ge \bigg( \frac{ n_{\ell-1} m_0}{5 \times 2^\ell} \bigg)^{2/3} \quad \text{for all} \; \ell \le \log_2(5^2 \alpha).
\end{equation}

If $n_\ell \ge \sqrt{2 k_{\ell}} \, m_0/2^\ell$, then Inequality (\ref{ineq:recursive-bound}) immediately holds, as
$$n_\ell \ge \frac{\sqrt{2 k_{\ell}} \, m_0}{2^{\ell} } = \bigg[\bigg( \frac{ n_{\ell-1} m_0}{5 \times 2^\ell} \bigg)^2 \bigg(\frac{(2 k_\ell)^{3/2} n^{1/2}}{n_{\ell-1}^2} \bigg) \bigg(\frac{5^2 \, \alpha}{2^\ell} \bigg)   \bigg]^{1/3} \ge \bigg( \frac{ n_{\ell-1} m_0}{5 \times 2^\ell} \bigg)^{2/3}$$
for $\alpha \ge 2^\ell/5^2$. Now, suppose that $n_\ell < \sqrt{2 k_{\ell}} \, m_0/2^\ell$. Then $k_{\ell} m_0/2^{\ell} - \sqrt{2 k_\ell} n_\ell$ is monotonically increasing with respect to $k_\ell$. Combining this fact with the bound $k_\ell \ge n_{\ell-1}/2$ and Inequality (\ref{ineq:Hsize}), we obtain
$$n_{\ell-1} m_0/2^{\ell+1} - \sqrt{n_{\ell-1}} \, n_\ell  \le k_{\ell} m_0/2^{\ell} - \sqrt{2 k_\ell} \, n_\ell \le \sqrt{2(n_\ell-1)} \, n_\ell +n_\ell-1.$$
This implies that 
$$n_{\ell-1} m_0/2^{\ell+1} \le \sqrt{2(n_\ell-1)} \, n_\ell +  \sqrt{n_{\ell-1}} \, n_\ell + n_\ell-1 < \frac{5}{2} n_\ell^{3/2},$$
for $n \ge 6388$, as $n_\ell \ge n_0 = \alpha \sqrt{n} +1$, and so the claim holds in this case as well.

Using the initial bound $n_0 >m_0$ and Inequality (\ref{ineq:recursive-bound}), we obtain
\begin{align*}
    n \ge n_\ell &\ge n_0^{(2/3)^\ell} \prod_{i=1}^\ell \bigg( \frac{m_0}{5\times 2^{\ell+1-i}} \bigg)^{(2/3)^i} \\ &= \frac{ n_0^{(2/3)^\ell}}{2^{2\ell}} \bigg(\frac{16m_0^2}{25}\bigg)^{1-(2/3)^\ell} \\&> \frac{16 \, m_0^{2-(2/3)^\ell}}{25 \times 2^{2\ell}} \\&=  \frac{16 \, \alpha^{2-(2/3)^\ell} n^{1-\tfrac{1}{2}(2/3)^\ell}}{25 \times 2^{2\ell}} 
\end{align*}
for $\ell \le \log_2(5^2 \alpha)$. Taking the logarithm of both sides, we obtain the inequality
\begin{equation}\label{ineq:ref}
\log_2 \alpha < \frac{1}{2-(2/3)^\ell} \big(  \log_2(25/16) + 2 \ell + \tfrac{1}{2}(2/3)^\ell \log_2 n \big).
\end{equation}
Setting
$\ell = \big\lceil \log_{3/2} \big(\tfrac{3}{11}  \log_2 n \big) \big\rceil$, we have $\ell < \log_2(5^2 \alpha)$, as
\begin{align*}\log_{3/2} \big(\tfrac{3}{11}  \log_2 n \big) +1 &= (\log_{3/2} 2) \log_2(\log_2 n) + \log_{3/2}(3/11) +1 \\ &<  (\log_{3/2} 2) \log_2(\log_2 n) + 2\log_2(5) -1 \\ &=\log_2\big(\tfrac{5^2}{2} \log_2^{\log_{3/2} 2} n\big) .
\end{align*}
For $\ell = \big\lceil \log_{3/2} \big(\tfrac{3}{11}  \log_2 n \big) \big\rceil$, Inequality \ref{ineq:ref} implies that
\begin{align*}
    \log_2 \alpha &<  \frac{ \log_2(25/16) + 2 \big[\log_{3/2} \big(\tfrac{3}{11}  \log_2 n \big)+1 \big] + \tfrac{1}{2}(2/3)^{ \log_{3/2} \big(\tfrac{3}{11}  \log_2 n \big)} \log_2 n}{2-(2/3)^{ \log_{3/2} \big(\tfrac{3}{11}  \log_2 n \big)}} \\
    &=\frac{1}{1 - \frac{11}{6\log_2 n}} \big[ \log_2(5/2) + \log_{3/2} \big( \tfrac{3}{11} \log_2 n \big)+\tfrac{11}{12} \big].
\end{align*}
Taking the (base two) exponential of both sides, we obtain
$$ \alpha < 2^{\frac{\log_2(5/2) - \log_{3/2}(11/3) +11/12}{1 - 11/(6 \log_2 6388)}} \, \big[ \log_2 n \big]^{\frac{\log_{3/2} 2}{1 - 11/6\log_2 6388}} \le .46 \big[\log_2 n \big]^{1.9995}.$$
This completes the proof.
\end{proof}

\begin{figure}
    \centering
    \includegraphics[width = 4.5 in]{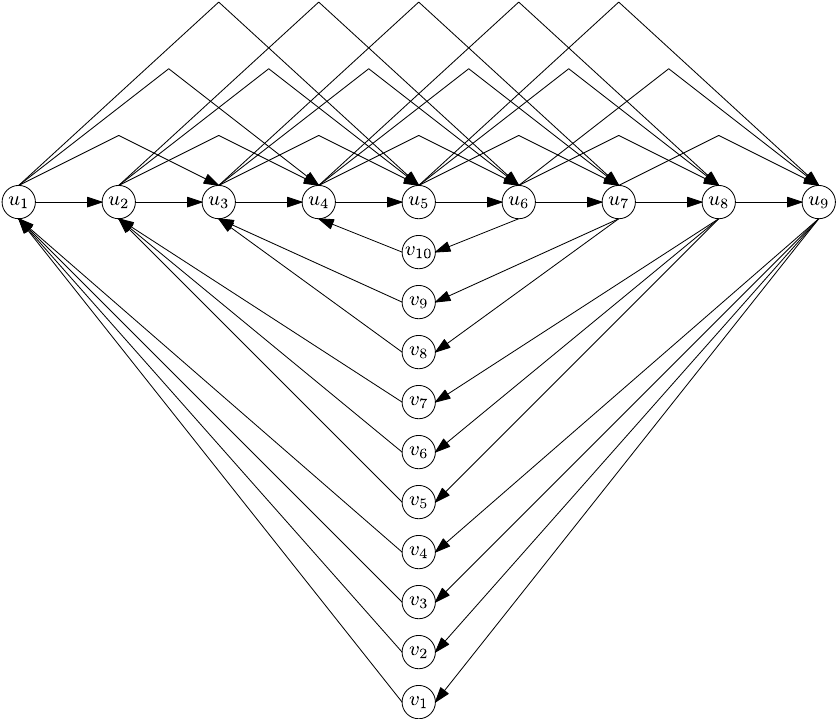}
    \caption{The minimally Eulerian graph $G_k$ from Example \ref{ex:graph} for $k = 4$.}
     \label{fig:small-graph}
\end{figure}
	
Finally, we give the following infinite class of digraphs to illustrate that Theorem \ref{thm:main} is tight up to a logarithmic factor.

\begin{example}\label{ex:graph}
Let $G_k = (V_k, A_k)$, $k \in \mathbb{N}$, $k \ge 4$, where $V_k = \{u_1,...,u_{\ell-1},v_1,...,v_{\ell}\}$, $\ell:=k(k+1)/2$, and
$u_iu_j \in A_k$ for $0<j-i \le k$, and $u_{\ell - \phi(i)} v_i,v_i u_{\phi(i)}\in A_k$ for all $i = 1,...,\ell$, where $\phi(i)$ is the smallest number $p \in \mathbb{N}$ such that $\sum_{j=1}^p(k+1-j) \ge i$. This digraph is minimally Eulerian, as every dicycle contains some vertex $v_i$ and $d^+(v_i) = d^-(v_i) = 1$ for all $i$. There are $n=k^2+k-1$ vertices and $k(k^2+2k-1)/2$ edges (i.e., about $n^{3/2}/2$). The distance between any pair $v_i,v_j$ in the graph is at least 
$\lceil (\ell+1)/k \rceil = \lceil k/2 \rceil +1 \ge \lfloor \sqrt{n}/2 \rfloor + 1$. In any Hamiltonian dicycle of a power of $G_k$, some pair $v_i,v_j$ must be adjacent, and so at least the $[\lfloor \sqrt{n}/2 \rfloor + 1]^{th}$ power is required. See Figure \ref{fig:small-graph} for a visual example for $k = 4$.
\end{example}

\section*{Acknowledgements}
This manuscript is the result of an undergraduate research project supported by Professor Michel Goemans through the MIT undergraduate research opportunities program (UROP), and we are thankful for his support. This project is also supported by the Lord Foundation through the MIT UROP program, and we are thankful for their support. This project is also supported by the MIT Office of Experiential Learning, and we are thankful for their support. This material is based upon work supported by the Institute for Advanced Study and the National Science Foundation under Grant No. DMS-1926686. The authors are grateful to Louisa Thomas for improving the style of presentation.

{ \small 
	\bibliographystyle{plain}
	\bibliography{main} }

\end{document}